\newcommand{\addresseshere}{%
  \enddoc@text\let\enddoc@text\relax
}
\theoremstyle{definition}
\newtheorem{theorem}{Theorem}
\newtheorem{lemma}{Lemma}
\newtheorem{remark}{Remark}
\title{An improved method for recursively computing upper bounds for two-colour
Ramsey numbers}
\author{Oliver Kr\"uger}
\address{Department of Mathematics, Stockholm University, Sweden}
\email{okruger@math.su.se}
\date{\today}
\begin{document} 
\maketitle 

\begin{abstract}
The two-colour Ramsey number $R(m,n)$ is the least natural number $p$
such that any graph of order $p$ must contain either a clique of size $m$
or an independent set of size $n$.
We exhibit a method for computing upper bounds for $R(m,n)$ recursively,
using known upper bounds of $R(\cdot,\cdot)$ with lower values for at
least one of the arguments.
We also give an example of how this method could be used to improve
several of the best known bounds that are available in the literature
(which however soon will be obsolete due to a forthcoming work).  
\end{abstract}

\section{Introduction}

We say that a graph $G$ is a $(m,n)$-graph if it contains no clique of size $m$ and
no independent set of size $n$. If the order of $G$ is $p$, then we say that $G$ is
a $(m,n;p)$-graph. The two-colour Ramsey number $R(m,n)$ is defined to be the least
natural number $p$ such that there are no $(m,n;p)$-graphs. A list of bounds on these
numbers are maintained in a dynamic survey authored by Radziszowski in \cite{dynsurv}.

We present a new method for computing upper bounds on Ramsey numbers $R(m,n)$
recursively from known upper bounds of $R(m_0,n_0)$, where $m_0 \leq m$ and
$n_0 \leq n$ with at least one of the inequalities being strict. We will also
show how this method can be effectively used to improve the upper bounds on
several of the upper bounds of $R(m,n)$ listed in \cite{dynsurv}. These
improvements will soon however be obsolete due to a forthcoming work by
Angeltveit and McKay (see Remark \ref{remark:obsolete}).  

The new method presented in this paper is an enhancement of the method of \cite{hwplus}
to derive bounds on the minimum edge numbers of $(m,n;p)$-graphs. These may then be used to obtain
stronger results on $(m,n+1)$- and $(m+1,n)$-graphs.

\section{The new method}

Let $e(m,n;p)$ and $E(m,n;p)$ denote the minimum and maximum number of
edges in a $(m,n;p)$-graph, respectively. We will denote the complement
of a graph $G$ by $\overline{G}$. The subgraph of $G$ induced
by the neighbours of a vertex $v$ is $G_v^+$, while the subgraph that is
induced by the vertices that are not adjacent to $v$ is $G_v^-$. $N(K_3;G)$
is the number of triangles in $G$ and $N(K_3;G,v)$ is the number of
triangles in $G$ that contain the vertex $v$. $n_d$ denotes the number
of vertices in $G$ that has degree $d$, and $d_v$ the degree of the
vertex $v$.

The methods used to prove the following two theorems are similar to those
used to prove Lemma \ref{hwplus}. In particular we use Goodman's
lemma for counting the total number of triangles in $G$ and
$\overline{G}$ (see \cite[Lemma 1]{goodman}), which states that if
$p$ is the order of $G$ then
$$N(K_3;G) + N(K_3;\overline{G}) = \binom{p}{3} - \frac{1}{2} \sum_v
d_v(p-d_v-1).$$

\begin{theorem}
\label{mymain}
Let $p,\alpha,\beta,\gamma,\delta$ be as in Lemma \ref{hwplus}. Then
$$(p-1)(p-2) \leq \max_{d \in [p-1-\delta,\gamma]}
2\binom{p-d-1}{2} + 2\Delta(m,n,p,d) + 3d(p-d-1),$$
where
$$\Delta(m,n,p,d) = E(m-1,n;d) - e(m,n-1;p-d-1).$$
\end{theorem}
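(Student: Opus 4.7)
\noindent\emph{Proof proposal.} My plan is to run a local triangle-counting argument at each vertex of an arbitrary $(m,n;p)$-graph $G$ and then aggregate using Goodman's identity. Fix a vertex $v$ of degree $d_v$. The induced subgraph $G_v^+$ contains no clique of size $m-1$ (else, together with $v$, we would have a $K_m$ in $G$) and no independent set of size $n$, so it is an $(m-1,n;d_v)$-graph and in particular has at most $E(m-1,n;d_v)$ edges. Symmetrically, $G_v^-$ is an $(m,n-1;p-d_v-1)$-graph and thus has at least $e(m,n-1;p-d_v-1)$ edges.

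Next I would count triangles through $v$. Triangles of $G$ through $v$ are in bijection with edges of $G_v^+$, while triangles of $\overline{G}$ through $v$ are in bijection with non-edges of $G_v^-$, which number $\binom{p-d_v-1}{2}-|E(G_v^-)|$. Adding these two inequalities gives
$$N(K_3;G,v)+N(K_3;\overline{G},v) \leq \binom{p-d_v-1}{2} + \Delta(m,n,p,d_v).$$

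Summing over all $v$, the left-hand side equals $3[N(K_3;G)+N(K_3;\overline{G})]$, since each triangle is rooted at three vertices. Substituting Goodman's identity on the left and then multiplying through by $2/p$, the left-hand side becomes $\tfrac{6}{p}\binom{p}{3}=(p-1)(p-2)$, while the right-hand side becomes the average over $v$ of
$$2\binom{p-d_v-1}{2} + 2\Delta(m,n,p,d_v) + 3 d_v(p-d_v-1),$$
which is at most its maximum over vertices of $G$. Since Lemma~\ref{hwplus} forces the degree of every vertex of an $(m,n;p)$-graph into the interval $[p-1-\delta,\gamma]$, the maximum may equivalently be taken over $d$ in that range, which is exactly the inequality claimed.

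The only non-routine step I foresee is the bookkeeping in the averaging: one must carefully verify that Goodman's $\tfrac12\sum_v d_v(p-d_v-1)$ term, scaled by $3$ and divided by $p$, lines up with the $3d(p-d-1)$ coefficient in the target expression, and that the degree range from Lemma~\ref{hwplus} really covers every vertex of any $(m,n;p)$-graph so that restricting the maximum to $d\in[p-1-\delta,\gamma]$ is legitimate. Once these are in place, the theorem reduces to a direct rearrangement.
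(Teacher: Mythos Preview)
Your argument is correct and is essentially the same as the paper's: both bound $N(K_3;G,v)$ and $N(K_3;\overline{G},v)$ via the edge extremes of $G_v^+$ and $G_v^-$, sum over vertices, and combine with Goodman's identity to obtain the stated maximum over the degree interval. The only cosmetic difference is that the paper groups the vertex sum by degree classes $n_d$ whereas you phrase the final step as ``average $\le$ max''; the algebra and the logic coincide.
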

\begin{proof}
Let $G$ be a $(m,n;p)$-graph. Clearly $d_v \in I := [p-1-\delta,\gamma]$ for all
vertices $v \in V(G)$, since $G_v^+$ is a $(m-1,n;d_v)$-graph and $G_v^+$ is
a $(m,n-1;p-1-d_v)$-graph. Note that $N(K_3;\overline{G},v) \leq
\binom{p-d_v-1}{2} - e(m,n-1;p-d_v-1)$ and $N(K_3;G,v) \leq E(m-1,n;d_v)$. Thus
we have
$$3N(K_3;\overline{G}) + 3N(K_3;G) \leq \sum_{d \in I}
\left( \binom{p-d-1}{2} + \Delta(m,n,p,d) \right)n_d.$$
By a straightforward application of Goodman's lemma we get
$$p(p-1)(p-2) \leq \sum_{d \in I} \left(
2 \binom{p-d-1}{2} + 2\Delta(m,n,p,d) + 3d(p-d-1)
\right)n_d,$$
and the lemma follows.
\end{proof}

The minimal edge numbers, $e(m,n;p)$, have been studied previously. For low
values of $m,n$ and $p$ the exact value of these are known. The following lemma will
however give us sufficiently good bounds for $e(m,n;p)$ (and
maximal edge numbers $E(m,n;p)$)
for use in Theorem \ref{mymain} to derive the new bounds listed in
Table \ref{t1} and \ref{t2}.

\begin{theorem}
\label{myedgebounds}
Let $p,\alpha,\beta,\gamma,\delta$ be as in Lemma \ref{hwplus}. Then
\begin{align*}
e(m,n;p) &\geq \max \left\{\frac{p(p-\delta-1)}{2}, \frac{A - \sqrt{A^2-B}}{12} \right\} \\
E(m,n;p) &\leq \min \left\{\frac{p \gamma}{2}, \frac{A + \sqrt{A^2-B}}{12} \right\},
\end{align*}
where
$$A = (\alpha - \beta + 3(p-1))p,\quad B = 12p^2(p-1)(p-\beta-2).$$
\end{theorem}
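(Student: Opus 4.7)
The plan is to establish the bounds in two independent pieces, one for each of the two quantities inside the max/min.

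\emph{The first quantity.} First I would observe, exactly as in the proof of Theorem~\ref{mymain}, that every vertex $v$ of a $(m,n;p)$-graph $G$ satisfies $d_v \in [p-1-\delta,\gamma]$, since $G_v^+$ is an $(m-1,n;d_v)$-graph and $G_v^-$ is an $(m,n-1;p-1-d_v)$-graph. Summing over $v$ and halving immediately yields $p(p-1-\delta)/2 \leq e(G) \leq p\gamma/2$, which supplies the $p(p-\delta-1)/2$ and $p\gamma/2$ terms in the two displayed bounds.

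\emph{The second quantity.} Write $T = N(K_3;G)+N(K_3;\overline{G})$ and $e = e(G)$. Goodman's lemma, as stated just before Theorem~\ref{mymain}, reads $T = \binom{p}{3} - \tfrac12\sum_v d_v(p-d_v-1)$; combined with the identity $\sum_v d_v(p-d_v-1) = 2(p-1)e - \sum_v d_v^2$ it gives
\[
\sum_v d_v^2 \;=\; 2(p-1)e - \tfrac{p(p-1)(p-2)}{3} + 2T.
\]
Next I would apply the Cauchy--Schwarz inequality in the form $\sum_v d_v^2 \geq (\sum_v d_v)^2/p = 4e^2/p$ and substitute whichever bounds on $T$ are supplied by Lemma~\ref{hwplus} in terms of $\alpha$ and $\beta$. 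The net effect should be a single quadratic inequality
\[
144\,e^2 - 24A\,e + B \;\leq\; 0,
\]
whose roots are exactly $(A \pm \sqrt{A^2-B})/12$. Hence the edge count of any $(m,n;p)$-graph must lie in the closed interval between these two roots, which produces the remaining halves of both displayed bounds simultaneously.

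\emph{Expected difficulty.} The conceptual ingredients are just Goodman's lemma plus Cauchy--Schwarz, so the hard part will be the bookkeeping. I will need to verify that, after substituting the $\alpha,\beta$-bounds on $T$ into $4e^2/p \leq 2(p-1)e - p(p-1)(p-2)/3 + 2T$, the resulting coefficients really do collapse to $24A = 24p(\alpha-\beta+3(p-1))$ and $B = 12p^2(p-1)(p-\beta-2)$. This is mechanical but fiddly, and I also expect to have to record the side condition $A^2 \geq B$ (presumably automatic in the regime covered by Lemma~\ref{hwplus}) so that the square root in the stated bound is real.
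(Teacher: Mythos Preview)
Your proposal is correct and follows essentially the same route as the paper: degree constraints give the easy bounds, and Goodman's lemma combined with the triangle-count estimates $3N(K_3;G)\le\sum_v \alpha d_v/2=\alpha e$ and $3N(K_3;\overline G)\le\sum_v \beta(p-1-d_v)/2$ plus Cauchy--Schwarz yields exactly the quadratic $144e^2-24Ae+B\le 0$ you anticipate. The only imprecision is your phrase ``bounds on $T$ supplied by Lemma~\ref{hwplus}'': that lemma does not state triangle bounds explicitly, so you should spell out the two displayed inequalities above (which are what underlie Lemma~\ref{hwplus}) rather than cite it.
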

\begin{proof}
Let $G$ be a $(m,n;p)$-graph. The bounds $e(m,n;p) \geq p(p-\delta-1)/2$ and
$E(m,n;p) \leq p\gamma/2$ are clear since $d_v \in [p-1-\delta,\gamma]$ for
all $v \in V(G)$. Using the bounds $3N(K_3;G) \leq \sum_{v \in V(G)}
\frac{\alpha d_v}{2}$ and $3N(K_3;\overline{G}) \leq \sum_{v \in V(G)}
\frac{\beta (p-1-d_v)}{2}$ together with Goodman's theorem we get
$$6\binom{p}{3} - \beta p(p-1) \leq \sum_v \left(
(\alpha - \beta + 3(p-1))d_v - 3d_v^2
\right).$$
Therefore, by the handshaking lemma and the Cauchy-Schwartz inequality, we
have $6\binom{p}{3} - \beta p (p-1) \leq 2(\alpha - \beta + 3(p-1))e - 12e^2/p$,
which we can rewrite as $0 \leq -12e^2 + 2Ae - B/12$, which gives us that
$e \in [(A - \sqrt{A^2 - B})/12,(A + \sqrt{A^2-B})/12]$.
\end{proof} 

\section{Example: Application to small Ramsey numbers as given in the
literature}

We will now show how the method described in the previous section could
be applied to obtain improved upper bounds for some small Ramsey numbers.
Note, however, that although this i an improvement on the best upper
bounds in the literature these are not the best known bounds (see 
Remark \ref{remark:obsolete}).

We will also use some previously known methods to compute upper bounds
and indicate where in the example calculation these are sufficient.
The following recursive classical bound, which appears in \cite{gg}, will be
used as the most trivial way to obtain new bounds.

\begin{lemma}[Greenwood and Gleason \cite{gg}]
\label{sumbound}
$R(m,n) \leq R(m-1,n) + R(m,n-1)$, with strict inequality if both $R(m-1,n)$ and $R(m,n-1)$ are even.
\end{lemma}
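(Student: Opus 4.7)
The plan is the standard pigeonhole argument on the neighbourhood of an arbitrary vertex. Let $p = R(m-1,n) + R(m,n-1)$ and let $G$ be any graph on $p$ vertices. Pick any $v \in V(G)$ and split the remaining $p-1$ vertices into the neighbours of $v$ (inducing $G_v^+$) and the non-neighbours (inducing $G_v^-$). Since $d_v + (p-1-d_v) = R(m-1,n) + R(m,n-1) - 1$, at least one of $d_v \geq R(m-1,n)$ or $p-1-d_v \geq R(m,n-1)$ must hold. In the first case $G_v^+$ contains either a $K_{m-1}$ (which together with $v$ forms a $K_m$) or an independent set of size $n$; in the second case $G_v^-$ contains either a $K_m$ or an independent set of size $n-1$ (which together with $v$ forms an independent set of size $n$). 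This establishes the non-strict bound.

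For the strict version I would assume that both $R(m-1,n)$ and $R(m,n-1)$ are even, set $q = p-1 = R(m-1,n) + R(m,n-1) - 1$, and suppose for contradiction that an $(m,n;q)$-graph $G$ exists. Applying the pigeonhole argument above to any $v \in V(G)$, the only way to avoid a forbidden subgraph is to have both $d_v \leq R(m-1,n) - 1$ and $q-1-d_v \leq R(m,n-1) - 1$. These two inequalities combined force the single value $d_v = R(m-1,n) - 1$ for every vertex $v$. But under the parity hypothesis, $R(m-1,n) - 1$ is odd and $q$ is odd (being a sum of two evens minus one), so the sum of degrees $q \cdot (R(m-1,n) - 1)$ is odd, contradicting the handshaking lemma.

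There is no real obstacle; the proof is an elementary pigeonhole followed by a parity check. The only thing that demands a little care is verifying that the degree constraints in the strict case genuinely pin $d_v$ down to the unique odd value $R(m-1,n) - 1$, since that is precisely what makes the handshaking contradiction go through.
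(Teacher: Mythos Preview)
Your argument is correct in both parts: the pigeonhole on the neighbourhood of a vertex gives the non-strict inequality, and the forced regularity $d_v = R(m-1,n)-1$ combined with the parity of $q$ yields the handshaking contradiction for the strict part. The paper does not supply its own proof of this lemma; it merely attributes the result to Greenwood and Gleason \cite{gg}, so there is nothing to compare against beyond noting that what you have written is exactly the classical argument from that reference.
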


The essence of \cite[Theorem 1]{hwplus}, in the context that we will use it,
is the following lemma.

\begin{lemma}[Huang, Wang, Sheng, Yang, Zhang and Huang \cite{hwplus}]
\label{hwplus}
If $p \leq R(m,n) - 1$, $\alpha \geq R(m-2,n)-1$, $\beta \geq R(m,n-2)-1$,
$\gamma \geq R(m-1,n)-1$ and $\delta \geq R(m,n-1)-1$, then
$$(p-1)(p-2-\alpha) \leq \max_{d \in [p-1-\delta,\gamma]} (-3d^2 + (\alpha - \beta + 3(p-1))d + (\beta - \alpha)(p-1)).$$
\end{lemma}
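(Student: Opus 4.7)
The plan is to fix an $(m,n;p)$-graph $G$ (which exists because $p \leq R(m,n)-1$) and bound the triangle counts in $G$ and $\overline{G}$ vertex-by-vertex, then combine the bounds with Goodman's identity. For each $v \in V(G)$, the induced graph $G_v^+$ is an $(m-1,n;d_v)$-graph and $G_v^-$ is an $(m,n-1;p-1-d_v)$-graph, so the Ramsey-number hypotheses give $d_v \leq R(m-1,n)-1 \leq \gamma$ and $p-1-d_v \leq R(m,n-1)-1 \leq \delta$. Hence $d_v \in [p-1-\delta,\gamma]$ for every $v$.

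Next I would bound the triangles through each vertex. The triangles of $G$ through $v$ are in bijection with the edges of $G_v^+$. For any $u \in V(G_v^+)$, the common neighbours of $u$ and $v$ in $G$ contain no $K_{m-2}$ (else $G$ would contain $K_m$) and no independent set of size $n$, so they number at most $R(m-2,n)-1 \leq \alpha$. Double-counting the edges of $G_v^+$ therefore gives $N(K_3;G,v) \leq \alpha d_v/2$. The analogous argument applied to $\overline{G_v^-}$, which is an $(n-1,m;p-1-d_v)$-graph whose vertex degrees are bounded by $R(n-2,m)-1 = R(m,n-2)-1 \leq \beta$, yields $N(K_3;\overline{G},v) \leq \beta(p-1-d_v)/2$.

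Summing over $v$ and using $\sum_v N(K_3;G,v) = 3N(K_3;G)$ together with the analogous identity for $\overline{G}$, this gives
$$6\bigl(N(K_3;G) + N(K_3;\overline{G})\bigr) \leq \sum_{v \in V(G)} \bigl[\alpha d_v + \beta(p-1-d_v)\bigr].$$
Expanding the left-hand side via Goodman's identity as $p(p-1)(p-2) - 3\sum_v d_v(p-1-d_v)$ and regrouping, one obtains
$$p(p-1)(p-2-\beta) \leq \sum_{v \in V(G)} \bigl[(\alpha-\beta+3(p-1))d_v - 3d_v^2\bigr].$$

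Finally, each summand on the right depends only on $d_v \in [p-1-\delta,\gamma]$, so the whole right-hand side is at most $p \cdot \max_{d \in [p-1-\delta,\gamma]}[(\alpha-\beta+3(p-1))d - 3d^2]$. Dividing by $p$ and adding $(\beta-\alpha)(p-1)$ to both sides, which by the identity $(p-1)(p-2-\beta) + (\beta-\alpha)(p-1) = (p-1)(p-2-\alpha)$ converts the left-hand side into the form claimed in the statement, yields the lemma. There is no genuine conceptual obstacle here; the only step worth tracking carefully is this last algebraic shift that absorbs the constant $(\beta-\alpha)(p-1)$ inside the maximum to match the exact form of the stated inequality.
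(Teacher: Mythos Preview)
Your proof is correct and follows exactly the approach the paper attributes to \cite{hwplus}: the paper does not write out a separate proof of this lemma but, in proving Theorems~\ref{mymain} and~\ref{myedgebounds}, uses precisely your vertex-by-vertex bounds $3N(K_3;G) \le \sum_v \alpha d_v/2$ and $3N(K_3;\overline{G}) \le \sum_v \beta(p-1-d_v)/2$ combined with Goodman's identity to reach the same intermediate inequality $6\binom{p}{3} - \beta p(p-1) \le \sum_v\bigl[(\alpha-\beta+3(p-1))d_v - 3d_v^2\bigr]$. Your final algebraic shift by $(\beta-\alpha)(p-1)$ to match the stated form is handled correctly.
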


A selection of the upper bounds we can obtain with the methods presented in
this paper
have been summarised in the following tables. Boldface indicates
that the bound is improved compared to the one in \cite{dynsurv} and the superscript
letters indicate by what method the bound has been obtained.

\begin{remark}
\label{remark:obsolete}
Angeltveit and McKay \cite{angeltveitmckayprivate}
 have improved on many of the upper bounds that are
listed in \cite{dynsurv}, completely independently of this paper. Their
improvements are stronger and therefore the bounds listed in the tables
below are generally not the best known. Their work is however still in
progress, and has not yet been published.

The numbers actually listed in the below table may therefore be considered obsolete.
The method used to compute them is however interesting, and this this example
calculation is made to illustrate its potential.
\end{remark}

\begin{table}[h]
\tiny{
\begin{tabular}{|l r||c|c|c|c|c|c|c|c|c|c|c|}
\hline
 & $n$ & 5 & 6 & 7 & 8 & 9 & 10 & 11 & 12 & 13 & 14 & 15 \\
$m$ &  &   &   &   &   &   &    &    &    &    &    &    \\
\hline
5 & & 48 & 87 & \textbf{142}$^b$ & \textbf{215}$^c$ & 316 & 442 & \textbf{629}$^c$ & \textbf{846}$^c$ & \textbf{1102}$^c$ & \textbf{1442}$^c$ & \textbf{1832}$^c$ \\
6 & & & 165 & 298 & 495 & 780 & 1171 & \textbf{1782}$^c$ & \textbf{2549}$^c$ & \textbf{3526}$^c$ & \textbf{4927}$^c$ & \textbf{6614}$^c$ \\
7 & & & & \textbf{539} & \textbf{1029}$^b$ & \textbf{1711}$^c$ & \textbf{2775}$^c$ & \textbf{4518}$^c$ & \textbf{6821}$^c$ & \textbf{10017}$^c$ & \textbf{14841}$^c$ &  \textbf{20928}$^c$ \\
8 & & & & & \textbf{1865} & \textbf{3576}$^a$ & \textbf{6061}$^b$ & \textbf{10297}$^c$ & \textbf{16777}$^b$ & \textbf{25933}$^c$ & \textbf{40140}$^c$ &  \textbf{59916}$^c$ \\
9 & & & & & & \textbf{6582} & \textbf{12643}$^a$ & \textbf{22161}$^b$ & \textbf{38000}$^c$ & \textbf{62763}$^c$ & \textbf{100614}$^c$ & \textbf{157549}$^c$ \\
10  & & & & & & & \textbf{23327} & \textbf{45488}$^a$ & \textbf{80231}$^b$ & \textbf{139767}$^c$ & \textbf{236772}$^b$ & \textbf{385139}$^c$ \\
\hline 
\end{tabular}
}
\caption{Upper bounds of $R(m,n)$. Boldface indicates improved bounds (compared
to \cite{dynsurv}).
Superscript is $a$ if Lemma \ref{sumbound} is sufficient, $b$ if Lemma \ref{hwplus} is sufficient but $c$
if this bound requires the new methods from Theorems \ref{mymain} and
\ref{myedgebounds}. Diagonal entries improved by using \cite[Theorem 5.1]{shi}.
Also see Remark \ref{remark:obsolete}.}
\label{t1}
\end{table}

\begin{table}[h]
\tiny{
\begin{tabular}{|l r||c|c|c|c|c|c|c|c|}
\hline
 & $n$ & 16 & 17 & 18 & 19 & 20 & 21 & 22 & 23 \\
$m$ &  &    &    &    &    &    &    &    &    \\
\hline
4 & & 514    &615$^b$   & 720$^c$ & 851$^a$ & 988$^b$ & 1129$^c$ & 1300$^a$ & 1476$^b$ \\
5 & & 2321$^c$ & 2916$^c$ & 3576$^c$ & 4397$^c$ & 5350$^c$ & 6381$^c$ & 7651$^c$ & 9074$^c$ \\
6 & & 8745$^c$ & 11596$^c$ & 14903$^c$ & 19037$^c$ & 24272$^c$ & 30177$^c$ & 37497$^c$ & 46374$^c$ \\
\hline
\end{tabular}
}
\caption{Upper bounds of $R(m,n)$. Same conventions as for \mbox{Table
\ref{t1}}.} 
\label{t2}
\end{table} 

\bibliographystyle{plain}


\addresseshere

\end{document}